\documentclass[reqno,hidelinks,a4paper]{amsart}
\usepackage[T1]{fontenc}
\usepackage{latexsym, amssymb, amsmath}
\usepackage{enumitem}
\usepackage{amsthm}
\usepackage{tikz-cd}
\usepackage{orcidlink}
\usepackage{doi}
\renewcommand{\doi}[1]{DOI:~\href{https://doi.org/#1}{#1}}
\usepackage{url}
\usepackage[utf8]{inputenc}
\usepackage[british]{babel}
\usepackage[all]{xy}
\usepackage{hyperref}
\usepackage{calrsfs}
\usepackage{mathabx}
\usepackage{xcolor}
\usepackage{blkarray}
\usepackage{microtype}
\usepackage[textsize=scriptsize]{todonotes}
\usepackage{thmtools,mathtools}
\usepackage{nameref,hyperref,cleveref}

\theoremstyle{plain}
\newtheorem{theorem}[subsection]{Theorem}
\newtheorem{proposition}[subsection]{Proposition}
	
\newtheorem{lemma}[subsection]{Lemma}

\theoremstyle{definition}
\newtheorem{definition}[subsection]{Definition}

\theoremstyle{remark}
\newtheorem{remark}[subsection]{Remark}
\newtheorem{example}[subsection]{Example}

\numberwithin{equation}{section}

\newcommand{\noproof}{\hfil\qed}

\newcommand*{\CondName}[1]{{\rm (#1)}}
\newcommand*{\LACC}{\CondName{LACC}}

\newcommand{\C}{\ensuremath{\mathcal{C}}}
\newcommand{\F}{\mathbb{F}}         
\newcommand{\E}{\mathcal{E}}
\newcommand{\Z}{\mathbb{Z}}

\DeclareMathOperator{\bchar}{char}
\DeclareMathOperator{\bZ}{Z}
\DeclareMathOperator{\Ker}{Ker}
\DeclareMathOperator{\Aut}{Aut}
\DeclareMathOperator{\Act}{Act}
\DeclareMathOperator{\DAct}{DAct}
\DeclareMathOperator{\Hom}{Hom}
\DeclareMathOperator{\SplExt}{SplExt}
\DeclareMathOperator{\Imm}{Im}
\DeclareMathOperator{\Der}{Der}
\DeclareMathOperator{\Inn}{Inn}

\DeclareMathOperator{\USGA}{USGA}
\DeclareMathOperator{\id}{id}

\newcommand{\Grp}{\ensuremath{\mathsf{Grp}}}
\newcommand{\Ab}{\ensuremath{\mathsf{Ab}}}
\newcommand{\Set}{\ensuremath{\mathsf{Set}}}
\newcommand{\Assoc}{\ensuremath{\mathsf{Assoc}}}
\newcommand{\Lie}{\ensuremath{\mathsf{Lie}}}
\newcommand{\Leib}{\ensuremath{\mathsf{Leib}}}
\newcommand{\CAssoc}{\ensuremath{\mathsf{CAssoc}}}
\newcommand{\AbAlg}{\ensuremath{\mathsf{AbAlg}}}
\newcommand{\Nil}{\ensuremath{\mathsf{Nil}}}
\newcommand{\SplExtX}{\ensuremath{\mathsf{SplExt}(X)}}

\newdir{>}{{}*:(1,-.2)@^{>}*:(1,+.2)@_{>}}
\newdir{<}{{}*:(1,+.2)@^{<}*:(1,-.2)@_{<}}
\newdir{>>}{{}*!/3.5pt/:(1,-.2)@^{>}*!/3.5pt/:(1,+.2)@_{>}*!/7pt/:(1,-.2)@^{>}*!/7pt/:(1,+.2)@_{>}}
\newdir{ >}{{}*!/-8pt/@{>}}

\date{}

\begin{document}

\title[Weak action representability of \texorpdfstring{\(2\)}{2}-nilpotent groups]{Weak action representability\\of \texorpdfstring{\(2\)}{2}-nilpotent groups}

\author[A.~Dioguardi Burgio]{Alessandro Dioguardi Burgio~\orcidlink{0000-0003-0856-0144}}
\author[M.~Mancini]{Manuel Mancini~\orcidlink{0000-0003-2142-6193}}
\author[T.~Van~der Linden]{Tim Van~der Linden~\orcidlink{0000-0001-5474-5356}}

\email{alessandro.dioguardiburgio@unipa.it}
\email{manuel.mancini@unipa.it; manuel.mancini@uclouvain.be}
\email{tim.vanderlinden@uclouvain.be}

\address[Alessandro Dioguardi Burgio]{Dipartimento di Ingegneria, Università degli Studi di Palermo, Viale delle Scienze, 90128 Palermo, Italy.}

\address[Manuel Mancini]{Dipartimento di Matematica e Informatica, Università degli Studi di Palermo, Via Archirafi 34, 90123 Palermo, Italy.}

\address[Manuel Mancini, Tim Van der Linden]{Institut de Recherche en Mathématique et Physique, Université catholique de Louvain, chemin du cyclotron 2 bte L7.01.02, B--1348 Louvain-la-Neuve, Belgium.}

\address[Tim Van der Linden]{Mathematics \& Data Science, Vrije Universiteit Brussel, Pleinlaan 2, B--1050 Brussel, Belgium}


\begin{abstract}
    In this article, we investigate the representability of actions of the category \(\Nil_2(\Grp)\) of \(2\)-nilpotent groups. We first provide an algebraic characterisation of derived actions in \(\Nil_2(\Grp)\) by determining a universal strict general actor of an object \(X\), which turns out to be the group \(\Aut_c(X)\) of central automorphisms of \(X\). We also characterise the morphisms \(B \to \Aut_c(X)\) that define an action of \(B\) on \(X\) in \(\Nil_2(\Grp)\).

    We then show that \(\Nil_2(\Grp)\) is not action representable, and that the existence of a weak representation is related to the amalgamation property. Using the construction of an amalgam of a suitable family of abelian subgroups of \(\Aut_c(X)\), we prove that the category \(\Nil_2(\Grp)\) is weakly action representable, and that a weak representing object can be chosen to be an abelian group.

    Finally, we show that \(\Nil_2(\Grp)\) is not locally algebraically cartesian closed.
\end{abstract}

\subjclass[2020]{08A35; 08C05; 16W22; 17A36; 18E13; 20F18; 20F28}
\keywords{Action accessible category; action representable  category; amalgamation property; central automorphism; local algebraic cartesian closedness; nilpotent group; split extension}

\maketitle

\section*{Introduction}\label{Introduction}
\emph{Internal actions} were defined in~\cite{BJK} by F.~Borceux, G.~Janelidze and G.~M.~Kelly in order to extend the correspondence between actions and split extensions from the category of groups to arbitrary semi-abelian categories~\cite{Semi-Ab}. In some of those categories, internal actions are exceptionally well behaved, in the sense that the actions on each object~\(X\) are \emph{representable}. More explicitly, this means that there exists an object~\([X]\) such that the functor
\[
    \Act(-,X)\cong \SplExt(-,X),
\]
sending an object~\(B\) to the set of actions of~\(B\) on~\(X\) (equivalently, to the set of isomorphism classes of split extensions of~\(B\) by~\(X\)), is naturally isomorphic to the functor \(\Hom(-,[X])\).

The notion of action representability in a semi-abelian category was further investigated in~\cite{BJK2}, where it is shown there that the category of commutative associative algebras over a field is not action representable. Subsequently, it was proved in~\cite{Tim} that, over an infinite field of characteristic different from~\(2\), the only action representable varieties of non-associative algebras are the category \(\Lie\) of Lie algebras and the category \(\AbAlg\) of abelian algebras. The strength of this notion naturally motivated the introduction of weaker related concepts.

In~\cite{act_accessible}, D.~Bourn and G.~Janelidze introduced the notion of an \emph{action accessible} category so as to encompass relevant examples that do not fit into the frame of action representable categories, such as rings, associative algebras, and Leibniz algebras~\cite{loday}. It was later proved by A.~Montoli in~\cite{Montoli} that every \emph{Orzech category of interest}~\cite{Orzech} is action accessible. Moreover, in~\cite{Casas}, the authors showed that every Orzech category of interest admits a weaker form of representing object, called the \emph{universal strict general actor} (USGA).

More recently, G.~Janelidze introduced in~\cite{WRA} the notion of a \emph{weakly action representable} category. Rather than requiring that, for every object \(X\) of a semi-abelian category~\(\C\), there exist an object \([X]\) and a natural isomorphism
\[
    \Act(-,X)\cong\Hom_{\C}(-,[X]),
\]
one only asks for the existence of an object \(T=T(X)\) of \(\C\) together with a natural monomorphism of functors
\[
    \Act(-,X)\rightarrowtail \Hom_{\C}(-,T).
\]
Such an object \(T\) is called a \emph{weak representing object} of \(X\). Examples of weakly action representable categories include the category of associative algebras~\cite{WRA} and the category of Leibniz algebras~\cite{CigoliManciniMetere}.

The notion of weak action representability was further investigated in~\cite{WRAAlg} in the setting of \emph{operadic} varieties of non-associative algebras over a field. In that work, the authors exhibited new examples of weakly action representable categories, including the variety of commutative associative algebras and the varieties of \(2\)-nilpotent (commutative and anti-commutative) algebras.

Another important observation made by  G.~Janelidze in~\cite{WRA} is that every weakly action representable category is action accessible. The converse, however, does not hold. Indeed, J.~R.~A.~Gray observed in~\cite[Proposition~2.2]{Gray} that weak action representability is closely related to the existence of a so-called \emph{amalgam}~\cite{kiss}, a notion that had already appeared in~\cite{BJK2} in connection with action representability, and he proved that the varieties of \(n\)-solvable groups are not weakly action representable, for any \(n \geq 3\). This result was later extended in~\cite{XabiMancini} to the varieties of \(2\)-solvable groups and of \(k\)-nilpotent groups, for any \(k \geq 3\). There, the authors also proved that the varieties of \(n\)-solvable Lie algebras (\(n \geq 2\)) and \(k\)-nilpotent Lie algebras (\(k \geq 3\)) fail to be weakly action representable. These are the first examples of action accessible varieties of non-associative algebras which are not weakly action representable.

The main goal of this article is to investigate the representability of actions in the category \(\Nil_2(\Grp)\) of \(2\)-nilpotent groups. After recalling the necessary background in \Cref{SecPrel}, we give an algebraic characterisation of (external) actions in \(\Nil_2(\Grp)\). More precisely, we characterise the actions of a \(2\)-nilpotent group \(B\) on another \(2\)-nilpotent group~\(X\) such that the corresponding semidirect product \(B \ltimes X\) is still \(2\)-nilpotent.

As a consequence, we obtain an explicit description of the universal strict general actor of~\(X\), which turns out to be the group \(\Aut_c(X)\) of \emph{central automorphisms} of~\(X\). We also provide a complete characterisation of the \emph{acting morphism}, that is, of those group homomorphisms \(B \to \Aut_c(X)\) that define an action of \(B\) on \(X\) in the category \(\Nil_2(\Grp)\).

In \Cref{sec_rep}, we investigate weak action representability in the category \(\Nil_2(\Grp)\). We first prove in \Cref{thm_ar} that \(\Nil_2(\Grp)\) is not action representable. Then, similarly to what was done in~\cite{WRAAlg} for the variety of commutative associative algebras, we show that, for a \(2\)-nilpotent group \(X\), the existence of a weak representation for the actions on \(X\) is closely related to the so-called \emph{amalgamation property} (AP)~\cite{kiss}. Finally, by constructing the amalgam of a suitable family of abelian subgroups of \(\Aut_c(X)\), we prove that \(\Nil_2(\Grp)\) is weakly action representable, and that as a weak representing object of \(X\) we may choose an abelian group. This result is analogous to the case of \(2\)-nilpotent Lie algebras. Indeed, it was shown in~\cite[Theorem 2.21]{WRAAlg} that the variety of \(2\)-nilpotent Lie algebras over a field~\(\F\) with \(\bchar(\F)\neq 2\) is weakly action representable, with a weak representing object of an object \(X\) being an abelian algebra.

Finally, in \Cref{sec_LACC} we study \emph{local algebraic cartesian closedness} \LACC{} for the category \(\Nil_2(\Grp)\), a notion introduced by J.~R.~A.~Gray in his Ph.D.\ thesis~\cite{GrayThesis} in connection with the concept of \emph{algebraic exponentiation} (see~\cite{BournGray,GrayACS}). In the semi-abelian setting, this property provides a natural categorical framework for the study of commutators and internal actions. In the context of varieties of non-associative algebras over a field, it has been shown that the condition \LACC{} essentially characterises the variety of Lie algebras (see~\cite{GM-VdL2,GM-VdL3,GrayLie}), so that, in this context, it is equivalent to action representability. At present, it remains an open problem whether local algebraic cartesian closedness implies (weak) action representability, or whether a counterexample exists. Here, using the characterisation of the condition \LACC{} asking that the comparison map
\[
    (B \flat X) + (B \flat Y) \to B \flat (X+Y),
\]
where \(B \flat X\) denotes the kernel of the morphism \([\id_B,0]\colon {B+X\to B}\), is an isomorphism, we prove that the category \(\Nil_2(\Grp)\) is not \LACC{}. This suggests a further line of investigation into the \LACC{} property for other subvarieties of \(\Grp\).

\section{Preliminaries}\label{SecPrel}

The notion of \emph{semi-abelian} category was introduced in~\cite{Semi-Ab} by G.~Janelidze, L.~Márki and W.~Tholen in order to provide a categorical setting which would capture categorical-algebraic properties of non-abelian algebraic structures, such as groups, rings and Lie algebras. We recall that a category~\(\C\) is
semi-abelian when it is pointed, Barr-exact~\cite{Barr}, Bourn-protomodular~\cite{Bourn} and has finite coproducts.

A notion that can be explored in the context of semi-abelian categories is that of split extension.

\begin{definition}
    Let \(\C\) be a semi-abelian category and let \(B\), \(X\) be objects of \(\C\). A \emph{split extension} of \(B\) by \(X\) is a diagram
    \begin{equation*}\label{eq:split_ext}
        \begin{tikzcd}
            X\arrow [r, "k"]
            &A \arrow[r, shift left, "p"] &
            B\ar[l, shift left, "s"]
        \end{tikzcd}
    \end{equation*}
    in \(\C\) such that \(p \circ s = \id_B\) and \((X,k)\) is a kernel of \(p\).
\end{definition}

For any object \(X\) of \(\C\), one can define the functor
\[
    \SplExt(-,X) \colon \C^{op} \to \Set
\]
which maps any object \(B\) of \(\C\) to the set \(\SplExt(B,X)\) of isomorphism classes of split extensions of \(B\) on \(X\), and any arrow \(f\colon B'\to B\) to the \emph{change of base} morphism \(f^*\colon \SplExt(B,X) \to \SplExt(B',X)\) given by pulling back along \(f\).

A feature of semi-abelian categories is that one can define a notion of \emph{internal action}. Internal actions correspond to split extensions via a semidirect product construction; it turns out that for our purposes we need no explicit description of what is an internal action, since split extensions are easier to work with, especially in categories of groups with operations. We refer the interested reader to~\cite{BJK2}, where the equivalence between the two concepts is described in detail.

For us here, it is sufficient to observe that if we fix an
object \(X\) of \(\C\), internal actions on \(X\) give rise to a functor
\[
    \Act(-,X)\colon \C^{op} \to \Set
\]
and a natural isomorphism \(\Act(-,X)\cong \SplExt(-,X)\) (see~\cite{BJK2}). This justifies the terminology in the definition that follows.

\begin{definition}\cite{BJK2}
    A semi-abelian category \(\C\) is said to be \emph{action representable} if for any object \(X\) in \(\C\), the functor \(\SplExt(-,X)\) is representable. This means that there exists an object \([X]\) of \(\C\) and a natural isomorphism of functors
    \[
        \SplExt(-,X) \cong \Hom_{\C}(-,[X]).
    \]
\end{definition}

The prototype examples of action representable categories are the category \(\Grp\) of groups and the category \(\Lie\) of Lie algebras over a commutative unital ring. In the first case, every action of \(B\) on \(X\) is represented by a group homomorphism \(B \to \Aut(X)\), where \(\Aut(X)\) is the group of automorphisms of \(X\). In the second case, any action of \(B\) on \(X\) is represented by a Lie algebra homomorphism \(B \to \Der(X)\), where \(\Der(X)\) is the Lie algebra of derivations of \(X\).

However, the notion of an action representable category has proved to be quite restrictive: for instance, the authors of~\cite{Tim} showed that, over an infinite field \(\F\) with \(\bchar(\F)\neq 2\), the only action representable varieties of non-associative algebras~\cite{VdL-NAA} are the variety of abelian algebras and the variety of Lie algebras.

\begin{remark}\label{BJ}
    We recall that a semi-abelian category \(\C\) is action representable if and only if, for every object \(X\) of \(\C\), the category \(\SplExtX\) of split extensions in~\(\C\) with kernel \(X\) admits a terminal object of the form
    \[
        \begin{tikzcd}
            X \arrow[r]
            & {[X] \ltimes X} \arrow[r, shift left] &
            {[X]\text{.}} \arrow[l, shift left]
        \end{tikzcd}
    \]
    This condition can be relaxed by requiring only that every object of \(\SplExtX\) is \emph{accessible}, i.e., it admits a morphism into a \emph{subterminal} (or \emph{faithful}) object. Here, we recall that an object is faithful if there exists at most one morphism into it. This gives rise to the following notion, introduced by D.~Bourn and G.~Janelidze in~\cite{act_accessible} in order to compute centralisers of normal subobjects and equivalence relations.
\end{remark}

\begin{definition}\cite{act_accessible}
    A semi-abelian category \(\C\) is \emph{action accessible} if, for any object \(X\) of \(\C\), every object in \(\SplExtX\) is accessible.
\end{definition}

The concept of action accessibility makes it possible to include a broader class of categories that fail to be action representable, such as the category of rings and the variety of associative algebras.

We observe that, since the existence of a terminal object in \(\SplExtX\) is stronger than every object being accessible, it follows from \Cref{BJ} that
\[
    \emph{action representability}\Rightarrow\emph{action accessibility.}
\]

A different way to weaken action representability is to relax the requirements on the representing object~\([X]\), rather than the existence of a terminal object in \(\SplExtX\). This is the approach adopted in~\cite{Casas}, where it was shown that every \emph{Orzech category of interest}~\cite{Orzech} admits a so-called \emph{universal strict general actor} (USGA). It was then proved by A.~Montoli in~\cite{Montoli} that every Orzech category of interest is action accessible. Moreover, it was shown in~\cite{CigoliManciniMetere} that, for every object~\(X\) of an Orzech category of interest \(\C\), there exists a natural monomorphism of functors
\[
    \tau \colon \SplExt(-,X) \rightarrowtail \Hom_{\C'}(U(-),\USGA(X)),
\]
where \(\C'\) is a category containing \(\C\) as a full subcategory, and \(U \colon \C \to \C'\) is the forgetful functor.

More recently, G.~Janelidze introduced in~\cite{WRA} the notion of weak action representability.

\begin{definition}\cite{WRA}
    A semi-abelian category \(\C\) is said to be \emph{weakly action representable} if for every object \(X\) in \(\C\), the functor \(\SplExt(-,X)\) admits a weak representation. This means that there exists an object \(T=T(X)\) of \(\C\) and a natural monomorphism of functors
    \[
        \tau\colon \SplExt(-,X) \rightarrowtail \Hom_{\C}(-,T).
    \]
    A morphism \((\varphi\colon B \to T) \in \Imm(\tau_B)\) is called \emph{acting morphism}.
\end{definition}

Notice that every action representable category is weakly action representable. In this case, \(T=[X]\) is the actor of \(X\), \(\tau\) is a natural isomorphism and every arrow \(\varphi\colon B \to [X]\) is an acting morphism. Other examples of weakly action representable categories are the variety \(\Assoc\) of associative algebras~\cite{WRA}, the variety \(\Leib\) of Leibniz algebras~\cite{CigoliManciniMetere}, and the varieties of \(2\)-nilpotent (commutative and anti-commutative) algebras (see~\cite{WRAAlg}). More generally, if in an Orzech category of interest \(\C\) each \(\USGA(X)\) is an object of the category, then \(\C\) is weakly action
representable (see~\cite[Corollary 4.2]{CigoliManciniMetere}).

\begin{remark}
    We observe that in the above mentioned examples, the weakly representing object \(T=T(X)\) is quite easy to describe. However, this is not always the case. For instance, the variety \(\CAssoc\) of commutative associative algebras is also weakly action representable, but the algebra that could be expected to be a weakly representing object is not commutative. Nevertheless, in~\cite{WRAAlg} a large colimit, along with the amalgamation property is used to naturally construct a weakly representing object.
\end{remark}

Another important observation made by G.~Janelidze in~\cite{WRA} is that every weakly action representable category is action accessible. We thus have that
\[
    \textit{action representability}\Rightarrow\textit{weak action representability}\Rightarrow\textit{action accessibility.}
\]
The author ended with an open question: whether reasonably mild conditions may be found on a semi-abelian category under which the second implication may be reversed.

This question was addressed by J.~R.~A.\ Gray, who showed in~\cite{Gray} that the notion of weak action representability is related to the so-called \emph{amalgamation property} (AP)~\cite{kiss}. Using (AP), he established sufficient conditions under which a Birkhoff subcategory of an action representable category fails to be weakly action representable. In particular, he proved that the varieties of \(n\)-solvable groups are not weakly action representable for any \(n \geq 3\). This result was later adapted in~\cite{XabiMancini} to prove that the varieties of \(k\)-nilpotent groups (\(k \geq 3\)), as well as the variety of \(2\)-solvable groups, are not weakly action representable. In the same paper, it was also shown that the varieties of \(k\)-nilpotent Lie algebras (\(k \geq 3\)) and the varieties of \(n\)-solvable Lie algebras (\(n \geq 2\)) fail to be weakly action representable. Note, however, that this is no longer true for \(k=2\): indeed, it was proved in~\cite[Theorem~2.21]{WRAAlg} that the variety of \(2\)-nilpotent anti-commutative algebras over a field \(\F\) with \(\bchar(\F)\neq 2\) is weakly action representable, with a weak representing object on an object \(X\) being an abelian algebra.

In this article, we show that an analogue of Theorem~2.21 of~\cite{WRAAlg} holds for the variety \(\Nil_2(\Grp)\) of \(2\)-nilpotent groups. More precisely, in the next section we provide an algebraic characterisation of actions in the variety of \(2\)-nilpotent groups, and we give an explicit description of the universal strict general actor of a \(2\)-nilpotent group \(X\). We then use this description in \Cref{sec_rep} to prove that \(\Nil_2(\Grp)\) is weakly action representable, and that a weak representing object of \(X\) is an abelian group.

\section{Actions and central automorphisms of \texorpdfstring{\(2\)}{2}-nilpotent groups}\label{sec_act}

In this section we aim to describe actions in the variety of \(2\)-nilpotent groups. We start by recalling the following definition.

\begin{definition}
    Let \(X\) be a group. The \emph{lower central series} of \(X\) is defined as
    \[
        X^{0}=X, \quad X^{k}=[X^{k-1},X],
    \]
    for any \(k \in \mathbb{N}\).

    The group \(X\) is said to be \emph{nilpotent of class \(k\)} if \(X^{k-1}\neq \{1\}\) and \(X^{k}=\{1\}\).
\end{definition}

We denote by \(\Nil_k(\Grp)\) the subvariety of \(\Grp\) consisting of all groups of nilpotency class at most~\(k\).

\begin{remark}
    Note that the class of all nilpotent groups (i.e., for any nilpotency class) does not form a variety; however, it does when the nilpotency class is bounded. For this reason, in the rest of the paper, when we say that a group \(X\) is \(2\)-nilpotent, we mean that \(X\) is an object of \(\Nil_2(\Grp)\), i.e., \(X\) is nilpotent of class at most \(2\). More precisely, the category \(\Nil_2(\Grp)\) consists of all groups \(X\) such that the \emph{commutator subgroup}
    \[
        [X,X]=\langle [x,y] \mid x,y \in X \rangle
    \]
    lies in the center \(\bZ(X)\) of \(X\). Here, as usual,
    \[
        [x,y]=xyx^{-1}y^{-1}
    \]
    denotes the commutator of the elements \(x\) and \(y\).
\end{remark}

Since the variety \(\Nil_2(\Grp)\) is an Orzech category of interest~\cite{Orzech}, it is convenient to describe internal actions in terms of the so-called \emph{derived actions}.

\begin{definition}
    Let
    \begin{equation}\label{diag:Spl}
        \begin{tikzcd}
            X\arrow [r, "k"]
            &A \arrow[r, shift left, "p"] &
            B\ar[l, shift left, "s"]
        \end{tikzcd}
    \end{equation}
    be a split extension of \(2\)-nilpotent groups. The map
    \[
        \xi \colon B \times X \to X \colon (b,x) \mapsto b \ast x
    \]
    defined by
    \[
        b \ast x = s(b)k(x)s(b)^{-1},
    \]
    is called the \emph{derived action} of \(B\) on \(X\) associated with \eqref{diag:Spl}.
\end{definition}

In fact, for every Orzech category of interest there is a natural isomorphism of functors
\[
    \Act(-,X) \cong \DAct(-,X),
\]
where \(\DAct(-,X)\) denotes the functor of derived actions on a fixed object \(X\). We refer the reader to~\cite[Remark 3.16 and Proposition 3.17]{Mancini_Thesis} for a detailed proof.

Given a map
\[
    \xi \colon B \times X \to X \colon (b,x) \mapsto b \ast x,
\]
one can always define a binary operation on the cartesian product \(B \times X\) by
\[
    (b,x) \cdot (b',x')=(bb',x (b \ast x')).
\]
Via~\cite[Theorem 2.4]{Orzech}, this defines a \(2\)-nilpotent group structure on \(B \times X\) if and only if \(\xi\) is a derived action of \(B\) on \(X\) in \(\Nil_2(\Grp)\). This, in turn, is equivalent to a set of conditions on the map \(\xi\), as explained in the following proposition, which is a special case of in~\cite[Proposition 1.1]{Datuashvili}.

\begin{proposition}\label{PropAct}
    \((B \times X,\cdot)\) is a \(2\)-nilpotent group if and only if the following conditions hold:
    \begin{enumerate}
        \item \(\xi\) is a derived action of \(B\) on \(X\) in the category \(\Grp\) of groups, that is:
              \begin{itemize}
                  \item[(i)] \(1 \ast x = x\);
                  \item[(ii)] \((bb') \ast x = b \ast (b' \ast x)\);
                  \item[(iii)] \(b \ast (xx') = (b \ast x)(b \ast x')\);
              \end{itemize}

        \item \((b \ast x)x^{-1} \in \bZ(X)\cap X^B\),
    \end{enumerate}
    for any \(b\), \(b' \in B\) and \(x\), \(x' \in X\), where
    \[
        X^B=\{x \in X \mid b\ast x=x, \; \forall b\in B\}
    \]
    is the set of fixed points of the action\footnote{If \(\xi\) is a derived action of \(B\) on \(X\) in \(\Grp\), then one may check that \(X^B\) is a subgroup of \(X\).}. The resulting \(2\)-nilpotent group is the \emph{semidirect product} of \(B\) and \(X\), denoted by \(B \ltimes X\).
\end{proposition}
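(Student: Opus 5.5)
We split the statement into two parts: first recover the group axioms, then analyse the \(2\)-nilpotency condition via commutators in \(B \times X\).

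\emph{Group structure.} We use the classical fact (the case of \(\Grp\)) that \((B\times X,\cdot)\) is a group if and only if \(\xi\) satisfies (i)--(iii). Then \(B\times X\) is the usual semidirect product. The maps \(b\mapsto (b,1)\) and \(x\mapsto(1,x)\) embed \(B\) and \(X\), and \((b,1)(1,x)(b,1)^{-1}=(1,b\ast x)\).

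Conversely, assume \((B\times X,\cdot)\) is a group. Note that \((1,1)\) is the identity: \((1,1)(b,x)=(b,1\ast x)\) for all \(x\), and the identity must be of the form \((1,e)\) with \(e\,(1\ast x)=x\). Associativity then forces \(1\ast x=x\), (ii) and (iii) by comparing second coordinates of \(((b,x)(b',x'))(b'',x'')\) and \((b,x)((b',x')(b'',x''))\) for suitable specialisations (e.g.\ \(x=x'=1\) gives (ii); \(b'=1,x=1\) gives (iii)). We then assume this reduction and concentrate on nilpotency.

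\emph{Nilpotency \(\Rightarrow\) (2).} Suppose \(A=B\ltimes X\) is \(2\)-nilpotent. In \(A\) we compute \([(b,1),(1,x)]=(1,(b\ast x)x^{-1})\). Since \([A,A]\subseteq \bZ(A)\), this element commutes with all \((1,y)\), giving \((b\ast x)x^{-1}\in \bZ(X)\). It also commutes with all \((b',1)\), and conjugating by \((b',1)\) acts by \(b'\ast-\), giving \((b\ast x)x^{-1}\in X^B\).

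\emph{(2) \(\Rightarrow\) nilpotency.} This is the main step. Here \(B\) and \(X\) are assumed to be \(2\)-nilpotent, since they are the objects of the category. It suffices to show that all commutators of elements of \(A\) are central, since \([A,A]\) is then generated by central elements. Write elements as \((b,1)(1,x)\) and expand \([(b,1)(1,x),(b',1)(1,x')]\) using the identities \([uv,w]=u[v,w]u^{-1}[u,w]\) and \([u,vw]=[u,v]\,v[u,w]v^{-1}\). This reduces everything to three types of basic commutators, plus conjugates of them:
\begin{itemize}
\item \([(b,1),(b',1)]=([b,b'],1)\);
\item \([(1,x),(1,x')]=(1,[x,x'])\);
\item mixed terms \((1,(b\ast x)x^{-1})\).
\end{itemize}

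Next we check centrality of each type. The mixed terms commute with \(X\) by the centre condition and are fixed by \(B\), so they are central. The terms \((1,[x,x'])\) commute with \(X\) since \(X\) is \(2\)-nilpotent. For centrality with respect to \(B\) we need \(b\ast[x,x']=[x,x']\). Writing \(b\ast x=xz\) and \(b\ast x'=x'z'\) with \(z,z'\) central in \(X\), we get \(b\ast[x,x']=[xz,x'z']=[x,x']\).

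The terms \(([b,b'],1)\) commute with \(B\) by \(2\)-nilpotency of \(B\). For \(X\) we need \([b,b']\ast x=x\). Write \(b\ast x=xz_b(x)\), where \(z_b(x)\) is central and fixed by \(B\). Then \(b\ast(b'\ast x)=b\ast(xz_{b'}(x))=xz_b(x)z_{b'}(x)\), which is symmetric in \(b\) and \(b'\). Hence elements of \(B\) act by commuting automorphisms, so commutators of \(B\) act trivially.

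Finally, conjugates of central elements are the elements themselves, so the products arising in the expansion are central. The main bookkeeping obstacle is the commutator expansion. To avoid it, we instead argue directly: the centre \(\bZ(A)\) is a subgroup containing all generators of \([A,A]\), namely the commutators of the generators \((b,1)\) and \((1,x)\). We then use the standard fact that a group generated by a set \(S\) in which every commutator of elements of \(S\) is central is \(2\)-nilpotent.
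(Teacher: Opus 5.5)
Your treatment of the nilpotency condition is correct, and in the converse direction it takes a different route from the paper. In the forward implication you do what the paper does, only more directly. The paper writes out the general identity \eqref{eq_2nil} and specialises it; you test the single commutator $[(b,1),(1,x)]=(1,(b\ast x)x^{-1})$ against $(1,y)$ and $(b',1)$.

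In the converse, the paper first proves $(bb')\ast x=(b'b)\ast x$ by a chain of equalities. It then substitutes this into the full expansion of $[(b,x),(b',x')](c,y)=(c,y)[(b,x),(b',x')]$, reduces it to $gy=y(c\ast g)$, and shows $g\in\bZ(X)\cap X^B$ by writing $g$ as a product of mixed terms and a commutator of $X$. You never expand a general commutator. Instead you check that the commutators of pairs from the generating set $\{(b,1)\}\cup\{(1,x)\}$ are central, using the same three facts: mixed terms lie in $\bZ(X)\cap X^B$, $[X,X]$ is fixed by $B$, and $[b,b']$ acts trivially. You then invoke the lemma that a group whose generators have pairwise central commutators is $2$-nilpotent.

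That lemma is valid. For fixed $w$, the set of $g$ with $[g,w]$ central is closed under products and inverses, by $[uv,w]=u[v,w]u^{-1}[u,w]$ and $[u^{-1},w]=u^{-1}[w,u]u$; then repeat in the other variable. Your computation $b\ast(b'\ast x)=x\,z_b(x)z_{b'}(x)$, symmetric in $b,b'$, is also a shorter proof that $B$ acts by commuting automorphisms. Your route isolates exactly which commutators must be checked and avoids error-prone bookkeeping; indeed the paper's displayed $g$ should read $x(b\ast x')(b'\ast x^{-1})x'^{-1}$. The paper's route is fully explicit and needs no auxiliary lemma.

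One step should not be claimed as derived: your deduction of (i)--(iii) from the group axioms is circular. Knowing that the unit has the form $(1,e)$ with $e(1\ast x)=x$ does not give $e=1$, and no argument can. Take $B$ trivial, $X\neq\{1\}$ $2$-nilpotent, a fixed $e\neq 1$, and $1\ast x=e^{-1}x$. Then $x\cdot x'=xe^{-1}x'$ is a group isomorphic to $X$ via $x\mapsto xe^{-1}$, hence $2$-nilpotent, yet (i), (ii) and (iii) all fail.

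So that implication holds only under the convention, implicit in the semidirect-product and split-extension setting, that $(1,1)$ is the unit. Granting it, $(1,1)(b,x)=(b,x)$ gives (i), and $(b,x)(1,1)=(b,x)$ gives $b\ast 1=1$. Your specialisations of associativity then do yield (ii) and (iii). The paper simply asserts this step, so you are no worse off than the paper here, but it should be stated as part of the setup rather than presented as a consequence of associativity.
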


\begin{proof}
    Suppose that \((B \times X,\cdot)\) is a \(2\)-nilpotent group. Then, \(\xi\) is a derived action in the category \(\Grp\). Moreover, for any \((b,x)\), \((b',x')\), \((c,y)\in B\times X\), the equality
    \[
        [(b,x),(b',x')] \cdot (c,y)=(c,y) \cdot [(b,x),(b',x')]
    \]
    gives
    \begin{equation}\label{eq_2nil}
        \begin{split}
            x(b \ast x')([b,b']b' \ast x^{-1}) & ([b,b'] \ast (x'^{-1}y))                                                                            \\
            =                                  & y\, c \ast \bigl(x(b \ast x')\bigl([b,b']b' \ast x^{-1}\bigr)\bigl([b,b'] \ast x'^{-1}\bigr)\bigr).
        \end{split}
    \end{equation}
    Now, for \(b'=c=1\) and \(x=1\), using the fact that \(b \ast 1=1\), we obtain
    \[
        (b \ast x')x'^{-1}y=y(b \ast x')x'^{-1},
    \]
    and therefore
    \[
        (b \ast x')x'^{-1}\in \bZ(X).
    \]
    Similarly, if we take \(b=1\) and \(x=y=1\), we get
    \[
        c \ast \bigl((b \ast x')x'^{-1}\bigr)=(b \ast x')x'^{-1},
    \]
    so that
    \[
        (b \ast x')x'^{-1}\in X^B.
    \]
    Conversely, suppose that conditions~(1) and~(2) hold. Since \(\xi\) is a derived action of~\(B\) on \(X\) in \(\Grp\), it follows that \((B \times X,\cdot)\) is a group. It remains to prove that it is \(2\)-nilpotent, that is,
    \[
        [(b,x),(b',x')] \cdot (c,y)=(c,y)\cdot [(b,x),(b',x')]
    \]
    for every \((b,x)\), \((b',x')\), \((c,y)\in B\times X\). Since both \(B\) and \(X\) are \(2\)-nilpotent, it is enough to verify that \eqref{eq_2nil} holds.

    Now, the condition \((b \ast x)x^{-1}\in X^B\) implies that
    \[
        (bb')\ast x=(b'b)\ast x
    \]
    for any \(b\), \(b'\in B\) and \(x\in X\). Indeed, one has
    \begin{align*}
        ((bb') \ast x)((b'b) \ast x)^{-1} = & ((bb') \ast x)((b'b) \ast x^{-1})                                                      \\
        =                                   & (b \ast (b' \ast x)) (b' \ast (b \ast x^{-1}))                                         \\
        =                                   & (b \ast ((b' \ast x) x^{-1})) (b \ast x) (b' \ast ((b \ast x^{-1})x))(b' \ast x^{-1} ) \\
        =                                   & (b' \ast x)x^{-1} (b \ast x) (b \ast x^{-1})x (b' \ast x^{-1} )                        \\
        =                                   & (b' \ast x)x^{-1} x (b' \ast x^{-1} )                                                  \\
        =                                   & (b' \ast x)(b' \ast x^{-1})=1.
    \end{align*}
    Therefore, \([b,b'] \ast x=x\) and \eqref{eq_2nil} reduces to
    \[
        gy = y(c\ast g),
    \]
    where
    \[
        g=x(b\ast x')(b\ast x^{-1})x'^{-1}.
    \]
    To conclude, observe that
    \[
        g=\bigl((b'\ast x)x^{-1}\bigr)^{-1}[\,b'\ast x,\; b\ast x'\,](b\ast x')x'^{-1}.
    \]
    Moreover, the commutator subgroup \([X,X]\) is fixed by the action of \(B\). Indeed, let \(b\in B\) and \(x\), \(x'\in X\). By condition~(2), there exist \(z_x\), \(z_{x'}\in \bZ(X)\) such that
    \[
        b\ast x = z_x x
        \quad\text{and}\quad
        b\ast x' = z_{x'}x'.
    \]
    It follows that
    \[
        b\ast [x,x']=[b\ast x,b\ast x']=[z_xx,z_{x'}x']=[x,x'],
    \]
    since \(z_x\) and \(z_{x'}\) are central.

    Now, since \(X^B\) is a subgroup of \(X\) and
    \[
        [X,X]\leq \bZ(X)\cap X^B,
    \]
    we deduce from the above expression for \(g\) that
    \[
        g\in \bZ(X)\cap X^B.
    \]
    Hence
    \[
        gy = yg = y(c\ast g),
    \]
    so that \eqref{eq_2nil} holds. Therefore, \((B \times X,\cdot)\) is a \(2\)-nilpotent group.
\end{proof}

\begin{remark}
    We observe that, for any split extension \eqref{diag:Spl} and the corresponding derived action
    \[
        \xi \colon B \times X \to X,
    \]
    there is an isomorphism of split extensions
    \begin{equation*}
        \begin{tikzcd}
            X \arrow[r, "\iota_2"] \arrow[d, equal]
            & B \ltimes X \arrow[r, shift left, "\pi_1"] \arrow[d, "\theta"]
            & B \arrow[l, shift left, "\iota_1"] \arrow[d, equal] \\
            X \arrow[r, "k"]
            & A \arrow[r, shift left, "p"]
            & B \arrow[l, shift left, "s"]
        \end{tikzcd}
    \end{equation*}
    where \(\iota_1\), \(\iota_2\), and \(\pi_1\) are the canonical injections and projection, and \(\theta\) is defined by
    \[
        \theta(b,x)=s(b)k(x),
    \]
    for every \((b,x)\in B\ltimes X\).
\end{remark}

\begin{remark}\label{rmk}
    Since \(\xi\) is an action of \(B\) on \(X\) in the category \(\Grp\), the assignment
    \[
        b \mapsto \varphi_b = b \ast -
    \]
    defines a group homomorphism \(\varphi \colon B \to \Aut(X)\). Moreover, the condition
    \[
        (b \ast x)x^{-1} \in \bZ(X)
    \]
    means that \(\varphi_b\) is a \emph{central automorphism} of \(X\):
    \[
        \varphi_b \in \Aut_c(X)=\{ f \in \Aut(X) \mid f(x)x^{-1} \in \bZ(X), \; \forall x \in X \}.
    \]
    It is straightforward to check that \(\Aut_c(X)\) is a normal subgroup of \(\Aut(X)\), but is not, in general, an object of \(\Nil_2(\Grp)\). For instance, if \(X=\Z_2\times \Z_2\), then
    \[
        \Aut_c(X)=\Aut(X)=\operatorname{GL}_2(\Z_2) \cong S_3
    \]
    which is not nilpotent.

    Conversely, let
    \[
        \varphi \colon B \to \Aut_c(X) \colon b \mapsto b \ast -
    \]
    be a group homomorphism such that
    \[
        b' \ast \bigl((b \ast x)x^{-1}\bigr)=(b \ast x)x^{-1}
    \]
    for any \(b\), \(b' \in B\) and \(x \in X\). Then, one can define a split extension
    \begin{equation*}
        \begin{tikzcd}
            X \arrow[r, "\iota_2"]
            & B \ltimes X \arrow[r, shift left, "p_1"]
            & B \arrow[l, shift left, "\iota_1"]
        \end{tikzcd}
    \end{equation*}
    in \(\Nil_2(\Grp)\), where the group structure on \(B \ltimes X\) is defined by
    \[
        (b,x)\cdot (b',x')=(bb',x(b\ast x')).
    \]
    However, a generic morphism \(B \to \Aut_c(X)\) need not give rise to a split extension in \(\Nil_2(\Grp)\), as the following example shows.
\end{remark}

\begin{example}
    Consider the \(2\)-nilpotent group
    \[
        B = \langle x, a, b \mid [x, a]=b^{-1}, [x, b] = x^5 = a^5 = b^5=1 \rangle.
    \]
    Let \(X=\Z_5 \times \Z_5 \times \Z_5\) and let \(\varphi \colon B \rightarrowtail \Aut(X)\) be the group monomorphism defined by
    \[
        \varphi(x) =
        \begin{pmatrix}
            1 & 1 & 0 \\
            0 & 1 & 0 \\
            0 & 0 & 1
        \end{pmatrix}, \quad
        \varphi(a) =
        \begin{pmatrix}
            1 & 0 & 0 \\
            0 & 1 & 1 \\
            0 & 0 & 1
        \end{pmatrix}, \quad
        \varphi(b)=
        \begin{pmatrix}
            1 & 0 & -1 \\
            0 & 1 & 0  \\
            0 & 0 & 1
        \end{pmatrix}.
    \]
    It is immediate to see that the relations are preserved.

    Let \(\xi\) be the action of \(B\) on \(X\) induced by \(\varphi\), that is,
    \[
        g \ast \underline{v}=\varphi(g) \underline{v}, \quad \forall g \in B, \; \forall \underline{v}=\begin{pmatrix}
            v_1 \\ v_2 \\ v_3
        \end{pmatrix} \in X.
    \]
    Since \(X\) is abelian, every automorphism of \(X\) is central. However, \(\xi\) is not an action in the variety \(\Nil_2(\Grp)\), since
    \[
        a \ast \begin{pmatrix}
            0 \\ 1 \\ 1
        \end{pmatrix} + \begin{pmatrix}
            -0 \\ -1 \\ -1
        \end{pmatrix} = \begin{pmatrix}
            1 & 0 & 0 \\
            0 & 1 & 1 \\
            0 & 0 & 1
        \end{pmatrix} \begin{pmatrix}
            0 \\ 1 \\ 1
        \end{pmatrix} + \begin{pmatrix}
            0 \\ -1 \\ -1
        \end{pmatrix} = \begin{pmatrix}
            0 \\ 1 \\ 0
        \end{pmatrix}
    \]
    and
    \[
        x* \Bigg( a \ast \begin{pmatrix}
                0 \\ 1 \\ 1
            \end{pmatrix} + \begin{pmatrix}
                0 \\ -1 \\ -1
            \end{pmatrix} \Bigg)=\begin{pmatrix}
            1 & 1 & 0 \\
            0 & 1 & 0 \\
            0 & 0 & 1
        \end{pmatrix}\begin{pmatrix}
            0 \\ 1 \\ 0
        \end{pmatrix}=\begin{pmatrix}
            1 \\ 1 \\ 0
        \end{pmatrix} \neq \begin{pmatrix}
            0 \\ 1 \\ 0
        \end{pmatrix}.
    \]
    In fact, it is shown in~\cite{XabiMancini} that the corresponding semidirect product \(B\ltimes X\) has nilpotency class \(3\).
\end{example}

We can now claim the following result.

\begin{theorem}\label{main_thm}
    Let \(X\) be a \(2\)-nilpotent group and let \(U \colon \Nil_2(\Grp) \to \Grp\) be the forgetful functor.
    \begin{itemize}
        \item[(i)] There exists a natural monomorphism of functors
              \[
                  \rho \colon \SplExt(-,X) \rightarrowtail \Hom_\Grp(U(-),\Aut_c(X)).
              \]

        \item[(ii)] A group homomorphism
              \[
                  \varphi \colon U(B) \to \Aut_c(X) \colon b \mapsto b \ast -
              \]
              belongs to \(\Imm(\rho_B)\) if and only if
              \begin{equation}\label{act_mor}
                  b' \ast ((b \ast x)x^{-1})=(b \ast x)x^{-1},
              \end{equation}
              for any \(b\), \(b' \in B\) and \(x \in X\). In this case, the image \(\varphi(B)\) is an abelian subgroup of \(\Aut_c(X)\).
    \end{itemize}
\end{theorem}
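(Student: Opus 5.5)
We define \(\rho\) by composing three ingredients already in place. The natural isomorphism \(\SplExt(-,X)\cong\DAct(-,X)\), valid in any Orzech category of interest, sends a split extension \eqref{diag:Spl} to its derived action \(b\ast x=s(b)k(x)s(b)^{-1}\). By \Cref{PropAct}, this \(\xi\) is a group action with \((b\ast x)x^{-1}\in\bZ(X)\cap X^B\). By \Cref{rmk}, \(b\mapsto\varphi_b=b\ast-\) is then a group homomorphism \(U(B)\to\Aut_c(X)\). We set \(\rho_B\) to be this assignment.

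For \emph{injectivity}, suppose two split extensions give the same \(\varphi\). Then they have the same derived action, so both are isomorphic to the same semidirect product \(B\ltimes X\) via the isomorphism \(\theta\) of the remark following \Cref{PropAct}. Hence they are the same element of \(\SplExt(B,X)\).

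For \emph{naturality}, take \(f\colon B'\to B\). The pullback of \(B\ltimes X\) along \(f\) is isomorphic to \(B'\ltimes X\) with action \(b'\ast x=f(b')\ast x\); the section of the pullback is \(b'\mapsto(f(b'),1)\) composed appropriately. Its derived action is therefore \(\varphi\circ U(f)\), which is exactly what naturality of \(\rho\) requires.

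For (ii), necessity is immediate: if \(\varphi=\rho_B\) of some extension, then condition~(2) of \Cref{PropAct} gives \((b\ast x)x^{-1}\in X^B\), which is \eqref{act_mor}. Sufficiency is the converse part of \Cref{rmk}. Given \eqref{act_mor}, and using that \(\varphi_b\) is central, both conditions of \Cref{PropAct} hold. So \(B\ltimes X\) is a \(2\)-nilpotent group, and the split extension \(X\to B\ltimes X\rightleftarrows B\) has derived action \((b,1)(1,x)(b,1)^{-1}=(1,b\ast x)\). Thus \(\rho_B\) of this extension is \(\varphi\).

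For the final claim, \(\varphi(B)\) is abelian. The proof of \Cref{PropAct} already shows, from \eqref{act_mor} alone, that \((bb')\ast x=(b'b)\ast x\) for all \(x\); that computation only used condition~(1) and \(X^B\)-invariance. Hence \(\varphi_b\varphi_{b'}=\varphi_{bb'}=\varphi_{b'b}=\varphi_{b'}\varphi_b\).

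The only mildly delicate point is naturality. It requires identifying the pullback explicitly, namely as the subgroup \(\{(b',(b,x)) : f(b')=b\}\cong B'\times X\), and checking that its derived action is the restricted one. This is routine.
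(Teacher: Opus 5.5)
Your proposal is correct and follows essentially the same route as the paper: $\rho_B$ sends an extension to $b\mapsto b\ast-$, injectivity holds because a split extension is determined up to isomorphism by its derived action, (ii) comes from \Cref{rmk} and \Cref{PropAct}, and commutativity of $\varphi(B)$ follows from the computation $(bb')\ast x=(b'b)\ast x$ (which, as you note, uses only condition~(1) and $X^B$-invariance). You spell out the pullback computation for naturality and the check that the semidirect product recovers $\varphi$, which the paper leaves as routine; one small caution is that, for the multiplication $(b,x)(b',x')=(bb',x(b\ast x'))$, the comparison isomorphism you borrow from the remark should be $\theta(b,x)=k(x)s(b)$ rather than $s(b)k(x)$.
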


\begin{proof}{\ }
    \begin{itemize}
        \item[(i)] We define \(\rho\) as follows: for every \(2\)-nilpotent group \(B\), the component
              \[
                  \rho_B \colon \SplExt(B,X) \to \Hom_\Grp(U(B),\Aut_c(X))
              \]
              is the morphism in \(\Set\) which maps any action
              \[
                  \xi \colon B \times X \to X \colon (b,x) \mapsto b \ast x
              \]
              to the group homomorphism
              \[
                  \varphi \colon B \to \Aut_c(X) \colon b \mapsto b \ast -.
              \]
              The transformation \(\rho\) is natural. Indeed, let
              \[
                  f \colon B' \to B
              \]
              be a morphism in \(\Nil_2(\Grp)\), and let
              \[
                  f^*=\SplExt(f,X) \colon \SplExt(B,X) \to \SplExt(B',X)
              \]
              be the \emph{change of base} function given by pulling back along \(f\). Then, it is easy to verify that the diagram in \(\Set\)
              \[
                  \begin{tikzcd}
                      \SplExt(B,X) \arrow[r, "\rho_B"] \arrow[d, swap, "{f^\ast}"]
                      & \Hom_\Grp(U(B),\Aut_c(X)) \arrow[d,"{- \circ f}"]\\
                      \SplExt(B',X) \arrow[r, swap, "\rho_{B'}"] & \Hom_\Grp(U(B'),\Aut_c(X))
                  \end{tikzcd}
              \]
              is commutative. Moreover, the map \(\rho_B\) is an injection since every element of \(\SplExt(B,X)\) is uniquely determined by the corresponding derived action of \(B\) on \(X\), that is, by the map
              \[
                  \xi \colon B \times X \to X \colon (b,x) \mapsto b \ast x.
              \]
              Therefore, \(\rho\) is a natural monomorphism of functors.
        \item[(ii)] By \Cref{rmk}, a morphism \(\varphi \colon U(B) \to \Aut_c(X)\) defines an action of \(B\) on \(X\) in \(\Nil_2(\Grp)\) if and only if \eqref{act_mor} holds. Moreover, as observed in the proof of \Cref{PropAct}, the condition \eqref{act_mor} implies that
              \[
                  (bb') \ast x = (b'b) \ast x,
              \]
              for any \(b\), \(b' \in B\) and \(x \in X\). This means that the automorphisms \(b \ast -\) and \(b' \ast -\) commute, that is, \(\Imm(\varphi)\) is an abelian subgroup of \(\Aut_c(X)\).\qedhere
    \end{itemize}
\end{proof}

We conclude this section with an example of a canonical action of any \(2\)-nilpotent group \(X\) on itself given by inner automorphisms. Before doing so, we state the following lemma, whose proof is immediate.

\begin{lemma}\label{lemma}

    Let \(X\) be a group, and let \(\Inn(X)\) denote the group of \emph{inner automorphisms} of \(X\). Then
    \[
        \Inn(X) \leq \Aut_c(X)
    \]
    if and only if \(X\) is \(2\)-nilpotent. \noproof
\end{lemma}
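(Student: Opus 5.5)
The plan is to unwind both conditions through the inner automorphisms \(c_g\colon x\mapsto gxg^{-1}\), for \(g\in X\). The key observation is that
\[
    c_g(x)x^{-1}=gxg^{-1}x^{-1}=[g,x].
\]
Hence \(c_g\) is a central automorphism exactly when \([g,x]\in\bZ(X)\) for every \(x\in X\). The whole statement therefore reduces to comparing the condition
\[
    [g,x]\in \bZ(X)\quad\text{for all } g,x\in X
\]
with the condition \([X,X]\le \bZ(X)\), which by the earlier remark is the defining property of \(\Nil_2(\Grp)\).

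For the ``if'' direction, assume \(X\) is \(2\)-nilpotent and take \(g\in X\). For every \(x\in X\), the commutator \([g,x]\) lies in \([X,X]\), and \([X,X]\le\bZ(X)\). So \(c_g(x)x^{-1}\in\bZ(X)\), which means \(c_g\in\Aut_c(X)\). Since every inner automorphism has this form, we get \(\Inn(X)\le\Aut_c(X)\).

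For the ``only if'' direction, assume \(\Inn(X)\le \Aut_c(X)\). By the observation above, every commutator \([g,x]\) is central. The subgroup \([X,X]\) is generated by these commutators, and \(\bZ(X)\) is a subgroup, so \([X,X]\le\bZ(X)\). Thus \(X\) is nilpotent of class at most \(2\).

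No step is a real obstacle. The only points to watch are the commutator convention \([x,y]=xyx^{-1}y^{-1}\) fixed in the paper, under which the identity \(c_g(x)x^{-1}=[g,x]\) holds literally, and the passage from ``every commutator is central'' to ``the subgroup they generate is central''.
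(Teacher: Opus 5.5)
Your argument is correct and is exactly the immediate verification the paper has in mind; the paper omits the proof. The identity \(c_g(x)x^{-1}=[g,x]\) shows that \(\Inn(X)\le\Aut_c(X)\) holds precisely when every commutator is central, i.e.\ when \([X,X]\le\bZ(X)\), and the authors use the same identity in the example following the lemma.
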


\begin{example}
    Let \(X\) be a \(2\)-nilpotent group. Then, by \Cref{lemma}, \(\Inn(X)\) is a subgroup of \(\Aut_c(X)\). 
    Now consider the canonical group homomorphism
    \[
        \Inn \colon X \to \Inn(X) \colon g \mapsto \varphi_g,
    \]
    where \(\varphi_g\)  is the inner automorphism defined by
    \[
        \varphi_g(x)=gxg^{-1},
    \]
    for any \(x \in X\). We have
    \[
        \varphi_h(\varphi_g(x)x^{-1})=\varphi_h([g,x])=[g,x]=\varphi_g(x)x^{-1},
    \]
    since \([g,x] \in \bZ(X)\). Therefore, by (2) of \Cref{main_thm}, if \(i \colon \Inn(X) \hookrightarrow \Aut_c(X)\) denotes the inclusion, then
    \[
        \overline{\Inn} = i \circ \Inn \in \Imm(\rho_X).
    \]
    Finally, the action of \(X\) on itself induced by \(\overline{\Inn}\) is precisely the conjugation action:
    \[
        g \ast x = \varphi_g(x)=gxg^{-1},
    \]
    for any \(g\), \(x \in X\). It follows that the corresponding semidirect product \(X \ltimes X\) is \(2\)-nilpotent.
\end{example}

\section{Weak action representability of \texorpdfstring{\(2\)}{2}-nilpotent groups}\label{sec_rep}

The result of \Cref{main_thm} may be viewed as a particular case of~\cite[Corollary~4.2]{CigoliManciniMetere}. Indeed, \(\Aut_c(X)\) is a universal strict general actor of \(X\) in the Orzech category of interest \(\Nil_2(\Grp)\). However, since \(\Aut_c(X)\) is not, in general, \(2\)-nilpotent, it does not provide a weak representing object for the actions on \(X\) in the variety \(\Nil_2(\Grp)\). In this section, we show that it is nevertheless possible to construct such a weak representing object starting from the images of the morphisms
\[
    U(B) \to \Aut_c(X)
\]
which correspond to the actions of \(B\) on \(X\) in \(\Nil_2(\Grp)\). Our aim is therefore to prove that \(\Nil_2(\Grp)\) is a weakly action representable category. We start by observing the following.

\begin{proposition}\label{thm_ar}
    The category \(\Nil_2(\Grp)\) is action accessible and not action representable.
\end{proposition}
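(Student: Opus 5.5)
Action accessibility follows directly from results already recalled: \(\Nil_2(\Grp)\) is an Orzech category of interest, as noted before \Cref{PropAct}, and Montoli~\cite{Montoli} proved that every Orzech category of interest is action accessible.

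For the failure of action representability I will argue by contradiction. Suppose an actor \([X]\) exists for every \(X\). Then \([X]\) is \(2\)-nilpotent and there is a natural bijection \(\SplExt(B,X)\cong\Hom_{\Nil_2(\Grp)}(B,[X])\). Every morphism \(B\to[X]\) then corresponds to an action, so every subgroup of \([X]\) acts on \(X\) in \(\Nil_2(\Grp)\). The canonical action of \([X]\) on \(X\) corresponds to \(\id_{[X]}\). By \Cref{main_thm}(ii), its image in \(\Aut_c(X)\) is abelian, and it satisfies condition \eqref{act_mor}.

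The key step is to find a single \(X\) whose actions cannot all be collected in one group. I plan to take \(X=\Z_2\times\Z_2\), where \(\Aut_c(X)=\Aut(X)\cong S_3\) by \Cref{rmk}. There are two candidate actions of \(B=\Z_2\):
\begin{itemize}
\item[(a)] via the transvection \(t=\begin{pmatrix}1&1\\0&1\end{pmatrix}\);
\item[(b)] via a different transvection \(t'=\begin{pmatrix}1&0\\1&1\end{pmatrix}\).
\end{itemize}
Each satisfies \eqref{act_mor}. For instance \((t-1)X=\langle e_1\rangle\), which \(t\) fixes, and the group acting is just \(\{1,t\}\). So both are actions in \(\Nil_2(\Grp)\), classified by morphisms \(f,f'\colon\Z_2\to[X]\).

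Now consider the morphism \([f,f']\colon\Z_2+\Z_2\to[X]\) out of the coproduct in \(\Nil_2(\Grp)\). Via the bijection it corresponds to an action of \(\Z_2+\Z_2\) on \(X\) whose restrictions, by naturality along the coproduct injections, are (a) and (b). Its image in \(\Aut_c(X)\) would therefore contain both \(t\) and \(t'\). But \(t\) and \(t'\) do not commute: they generate all of \(S_3\). This contradicts the abelianness given by \Cref{main_thm}(ii). Hence \([X]\) cannot exist.

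The main obstacle is making the naturality step precise. I need that the action corresponding to \([f,f']\) restricts along each injection \(\iota_i\) to the actions (a) and (b). This is exactly naturality of \(\SplExt(-,X)\cong\Hom(-,[X])\) with respect to \(\iota_i\), combined with naturality of \(\rho\) from \Cref{main_thm}(i). With both in hand, the contradiction is immediate.
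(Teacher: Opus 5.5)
Your proof is correct. The action-accessibility part is the same as the paper's. For non-representability you take a different route.

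The paper disposes of it in one line: it invokes \cite[Theorem~3.6]{Casas}, together with the identification \(\USGA(X)=\Aut_c(X)\) and the fact that \(\Aut_c(\Z_2\times\Z_2)\cong S_3\) is not nilpotent.

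You argue directly from the definition. A representable \(\SplExt(-,X)\) sends the coproduct \(\Z_2+\Z_2\) in \(\Nil_2(\Grp)\) (the dihedral group of order \(8\)) to a product. So the two admissible actions via \(t\) and \(t'\), each with semidirect product \(D_4\), would glue to one action. Its image in \(\Aut_c(X)\) would contain the non-commuting \(t,t'\), contradicting \Cref{main_thm}(ii). Your verifications of \eqref{act_mor} and of \(tt'\neq t't\) are correct. Naturality of \(\rho\) and of the representing bijection along the coproduct injections gives exactly the restriction step you need.

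Your preliminary paragraph already contains an even shorter variant. By naturality, both \(t\) and \(t'\) would lie in the image of the canonical action of \([X]\), which is abelian. So the coproduct is not actually needed.

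Comparing the two routes: the paper's is a citation that places the result in the USGA framework. It rests on an external criterion and on the equality \(\USGA(X)=\Aut_c(X)\), which the paper asserts rather than proves. Yours is self-contained and uses only \Cref{main_thm}. It also isolates the real obstruction: images of acting morphisms must be abelian, so individually valid actions need not be jointly classifiable. That is precisely the phenomenon the paper exploits next, when it builds the weak representing object as an amalgam of abelian subgroups.
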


\begin{proof}
    The category \(\Nil_2(\Grp)\) is action accessible, since it is an Orzech category of interest (see~\cite{Montoli}). On the other hand, by~\cite[Theorem 3.6]{Casas}, it is not action representable, because \(\USGA(X)=\Aut_c(X)\) is not, in general, an object of~\(\Nil_2(\Grp)\).
\end{proof}

We now explain how the problem of finding a weak representation of the actions on a \(2\)-nilpotent group \(X\) is related to the so-called \emph{amalgamation property} (AP)~\cite{kiss}.

We recall that for a diagram in a category \(\C\), an \emph{amalgam} is a monic cocone, i.e., a cocone which is a monomorphic natural transformation. This means that each component of that cocone is a monomorphism, which implies that all the morphisms of the given diagram were monomorphisms to begin with. Note that in a category with colimits, an amalgam for a diagram exists if and only if its colimit cocone is such an amalgam. A category is said to have the amalgamation property~(AP) when every span of monomorphisms admits an amalgam; equivalently, for each pushout square
\[
    \xymatrix{I \ar[r]^t \ar[d]_s & T\ar[d]^-{\iota_T}\\
    S \ar[r]_-{\iota_S} & S+_IT}
\]
if \(s\) and \(t\) are monomorphisms then so are \(\iota_S\) and \(\iota_T\). It is well known that the categories of groups, abelian groups and Lie algebras satisfy the condition (AP), whereas, for instance, the category of associative algebras does not. We refer the reader to~\cite{kiss} for an overview of examples and references.

The relationship between action representability and (AP) was first investigated in~\cite{BJK2}, where it was proved that, for any Orzech category of interest, action representability is equivalent to (AP) for \emph{protosplit} monomorphisms, i.e., monomorphisms arising as kernels of split extensions. Moreover, in~\cite{WRAAlg}, the authors showed that the existence of a weak representation of the actions on a non-associative algebra \(X\) is related to the existence of an amalgam for a certain diagram of subalgebras of the external weak actor \(\E(X)\).

Now, by \Cref{main_thm} each action \(\xi\) of \(B\) on \(X\) in \(\Nil_2(\Grp)\) gives rise to a morphism \(\rho_B(\xi)\colon U(B)\to \Aut_c(X)\) in \(\Grp\). We want to show that \(\xi\) is also determined by a morphism of \(2\)-nilpotent groups \(\tau_B(\xi)\colon B\to T\), where \(T=T(X)\) is a weak representing object of \(X\).

Each morphism \(\rho_B(\xi)\colon U(B)\to \Aut_c(X)\) is the composite of a surjective morphism of \(2\)-nilpotent groups \(\rho'_B(\xi)\colon U(B)\twoheadrightarrow M_\xi\), where \(M_\xi=\Imm(\rho_B(\xi))\), and the canonical inclusion \(M_\xi \hookrightarrow \Aut_c(X)\). We thus find a diagram of \(2\)-nilpotent subgroups of \(\Aut_c(X)\) indexed over the \(2\)-nilpotent group actions on \(X\). We may re-index and view \(\{M_\xi\}_\xi\) as a diagram in \(\Grp\) over a subcategory of the thin category of \(2\)-nilpotent subgroups of \(\Aut_c(X)\). We further observe that, by \Cref{PropAct}, each \(M_\xi\) is an abelian group. Thus, \(\{M_\xi\}_\xi\) is a family of abelian subgroups of the group of central automorphisms of~\(X\).

Now, the category \(\Ab\) of abelian groups has the amalgamation property~\cite{kiss}, and we can therefore construct an amalgam\footnote{We recall that, given a span of monomorphisms \(B \mathrel{\stackrel{f}{\leftarrowtail}} S \mathrel{\stackrel{f'}{\rightarrowtail}} B'\) in \(\Ab\), its amalgam is given by the pushout
    \(B +_S B'=(B \oplus B') / H\), where \(H=\langle (f(s),f'(s)^{-1}) \mid s \in S \rangle\).} \(T\) of \(\{M_\xi\}_\xi\) in \(\Ab\). Thus, we can associate with any action~\(\xi\) of \(B\) on \(X\) in the variety \(\Nil_2(\Grp)\), the homomorphism
\begin{equation*}
    \begin{tikzcd}
        B \arrow[r, two heads, "\rho'_B(\xi)"]
        & M_\xi \arrow[r, tail, "j_{M_\xi}"]
        & T
        \arrow[from=1-1, to=1-3, bend right=25, "\tau_B(\xi)"']
    \end{tikzcd}
\end{equation*}
where \(j_{M_\xi}\) denotes the inclusion of \(M_\xi\) inside the amalgam \(T\). We further observe that \(j_{M_\xi}\) depends only on the abelian group \(M_\xi\) and not on the action \(\xi\). We now prove that the assignment \(\xi \mapsto \tau_B(\xi)\) defines a weak representation of the actions on \(X\), as stated in the following theorem.

\begin{theorem}\label{thm_wra}
    The category \(\Nil_2(\Grp)\) is weakly action representable. Moreover, for any \(2\)-nilpotent group \(X\), a weak representing object of \(X\) is the abelian group~\(T\) constructed above.
\end{theorem}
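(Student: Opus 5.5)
We have to show two things: that \(\tau\) is a natural transformation \(\SplExt(-,X)\to\Hom_{\Nil_2(\Grp)}(-,T)\), and that each component \(\tau_B\) is injective. Since \(T\) is abelian, it is an object of \(\Nil_2(\Grp)\). Each \(\tau_B(\xi)=j_{M_\xi}\circ\rho'_B(\xi)\) is a group homomorphism between \(2\)-nilpotent groups, so \(\tau_B\) indeed lands in \(\Hom_{\Nil_2(\Grp)}(B,T)\).

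\textbf{Injectivity.} Suppose \(\tau_B(\xi)=\tau_B(\xi')\). The plan is to recover \(\rho_B(\xi)\) from \(\tau_B(\xi)\), and then apply the injectivity of \(\rho_B\) from \Cref{main_thm}(i).

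The key point is how the amalgam is built. The diagram \(\{M_\xi\}_\xi\) consists of subgroups of \(\Aut_c(X)\) with inclusions as morphisms. We choose \(T\) so that the components \(j_M\) are compatible with a single comparison map. Concretely, I would construct \(T\) as the colimit in \(\Ab\) of the diagram formed by all the abelian subgroups \(M_\xi\), closed under intersections, with inclusions as morphisms. Each \(j_M\) is then a monomorphism by (AP) in \(\Ab\).

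I would then check the following. If \(j_M(m)=j_{M'}(m')\) for \(m\in M\) and \(m'\in M'\), then \(m=m'\) in \(\Aut_c(X)\). To see this, use the explicit description of the colimit as \(\bigoplus M/H\), as in the footnote. The relations in \(H\) only identify an element of \(M\cap M'\) with itself, viewed in \(M\) and in \(M'\). Hence every element of \(T\) determines an element of \(\Aut_c(X)\) independently of the chosen representative.

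This is the main obstacle. A literal direct sum modulo pairwise intersections is \emph{not} a subgroup of \(\Aut_c(X)\), and sums of elements coming from different \(M\)'s have no meaning there. So I would only claim that the restriction of the partial map to each \(j_M(M)\) is well defined. Concretely, I would show that the subsets \(j_M(M)\subseteq T\) overlap exactly in \(j_{M\cap M'}(M\cap M')\), i.e.\ that \(j_M(M)\cap j_{M'}(M')=j_{M\cap M'}(M\cap M')\). This follows from the strong amalgamation property of \(\Ab\), which is verified directly on the quotient \(\bigoplus M/H\).

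Granting this, \(\tau_B(\xi)=\tau_B(\xi')\) gives, for every \(b\in B\),
\[
j_{M_\xi}(\rho_B(\xi)(b))=j_{M_{\xi'}}(\rho_B(\xi')(b)).
\]
Therefore \(\rho_B(\xi)(b)=\rho_B(\xi')(b)\) in \(\Aut_c(X)\), so \(\rho_B(\xi)=\rho_B(\xi')\), and hence \(\xi=\xi'\).

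\textbf{Naturality.} Let \(f\colon B'\to B\) be a morphism of \(2\)-nilpotent groups. By the naturality of \(\rho\), we have \(\rho_{B'}(f^*\xi)=\rho_B(\xi)\circ f\), so \(M_{f^*\xi}=\rho_B(\xi)(f(B'))\) is a subgroup of \(M_\xi\). The subgroup \(M_{f^*\xi}\) itself belongs to the family, since it is the image of an action. Compatibility of the cocone along the inclusion \(M_{f^*\xi}\le M_\xi\) gives
\[
j_{M_{f^*\xi}}=j_{M_\xi}\circ\mathrm{incl}.
\]
Thus \(\tau_{B'}(f^*\xi)=\tau_B(\xi)\circ f\). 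This shows that \(\tau\) is a natural monomorphism and that \(T\) is a weak representing object of \(X\), which is moreover abelian.
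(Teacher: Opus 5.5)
Your overall structure is the paper's: an amalgam in \(\Ab\) of the images \(M_\xi\), \(\tau_B(\xi)=j_{M_\xi}\circ\rho'_B(\xi)\), and injectivity by recovering \(\rho_B(\xi)\). Your naturality argument via cocone compatibility along \(M_{f^*\xi}\le M_\xi\) is correct. It also covers the case \(M_{f^*\xi}\subsetneq M_\xi\), which is why the diagram must contain the inclusions.

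The gap is the step you yourself flag as the main obstacle: that the colimit legs \(j_M\) are monic, and that \(j_M(m)=j_{M'}(m')\) forces \(m=m'\). Neither follows from (strong) amalgamation in \(\Ab\), which is a statement about spans. Your remark that the relations in \(H\) only identify an element of \(M\cap M'\) with itself is true of the generators of \(H\), but not of \(H\). Sums of generators around a cycle of members can create new identifications, even for an intersection-closed family of abelian subgroups of one fixed group.

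For instance, let \(G\) be the two-qubit Pauli group, with \(x_k,y_k,z_k\) the Pauli matrices on the \(k\)-th factor. Let \(T\) be the colimit in \(\Ab\) of all abelian subgroups of \(G\) under inclusion, and write \([g]\) for the image of \(g\). Then \([gh]=[g]+[h]\) whenever \(gh=hg\). Using \((x_1y_2)(y_1x_2)=z_1z_2\) and \((x_1x_2)(y_1y_2)=-z_1z_2\), where all pairs involved commute, one gets
\[
[z_1z_2]=[x_1y_2]+[y_1x_2]=[x_1]+[y_2]+[y_1]+[x_2]=[x_1x_2]+[y_1y_2]=[-z_1z_2]=[z_1z_2]+[-1].
\]
So \([-1]=0\): the leg at \(\langle -1\rangle\) is not injective, and the distinct elements \(z_1z_2\) and \(-z_1z_2\) are identified. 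This is the Mermin--Peres square. Hence the claim ``verified directly on the quotient'' is false in the generality you use it. The paper's text, too, only appeals to (AP) and to thinness of the diagram at this point.

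What is missing is an input specific to the family \(\{M_\xi\}\). For \(f\in\Aut_c(X)\) put \(\delta_f(x)=f(x)x^{-1}\). Centrality gives \(\delta_f\in\Hom(X,\bZ(X))\), which is an abelian group under pointwise multiplication, and \(\delta_f\) is trivial only for \(f=\id_X\). If \(f,g\) lie in a subgroup \(M\) satisfying \eqref{act_mor}, then \(f(\delta_g(x))=\delta_g(x)\), hence
\[
\delta_{fg}(x)=f\bigl(\delta_g(x)x\bigr)x^{-1}=\delta_g(x)\,\delta_f(x).
\]
So each \(\delta|_M\) is an injective homomorphism. Since these maps are restrictions of a single function, they form a cocone, and \(\delta|_M=\bar\delta\circ j_M\) for some \(\bar\delta\colon T\to\Hom(X,\bZ(X))\). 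This gives at once that every \(j_M\) is monic. It also gives that \(j_M(m)=j_{M'}(m')\) implies \(\delta_m=\delta_{m'}\), i.e.\ \(m=m'\), which is exactly what your injectivity argument needs. With this lemma in place your proof goes through. It also shows that \(\Hom(X,\bZ(X))\), with \(\tau_B(\xi)=\delta\circ\rho_B(\xi)\), is itself an abelian weak representing object, without any amalgam.
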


\begin{proof}
    Let \(X\) be a \(2\)-nilpotent group. By the discussion above, for any other \(2\)-nilpotent group \(B\), we define a map
    \[
        \tau_B \colon \SplExt(B,X) \to \Hom_{\Nil_2(\Grp)}(B,T).
    \]
    Such a map is injective. Indeed, suppose there exist two actions \(\xi\) and \(\psi\) of \(B\) on~\(X\) in the variety \(\Nil_2(\Grp)\) such that \(\tau_B(\xi)=\tau_B(\psi)\). Then, by uniqueness of image factorisations, the images of \(j_{M_\xi}\colon M_\xi\to T\) and \(j_{M_\psi}\colon M_\psi\to T\) are isomorphic subgroups of \(T\). Now, the image in \(\Grp\) of the diagram \(\{M_\xi\}_\xi\) is a thin category, so that \(M_\xi=M_\psi\). Hence
    \[
        j_{M_\xi}\circ \rho'_B(\xi)=\tau_B(\xi)=\tau_B(\psi)=j_{M_\psi}\circ\rho'_B(\psi)=j_{M_\xi}\circ\rho'_B(\psi),
    \]
    which implies \(\rho'_B(\xi)=\rho'_B(\psi)\). Since \(\rho\) is a natural monomorphism, we get \(\xi=\psi\).

    It remains to prove that the collection \(\{\tau_B\}_B\) gives rise to a natural transformation
    \[
        \tau \colon \SplExt(-,X) \to \Hom_{\Nil_2(\Grp)}(-,T).
    \]
    This follows directly by the naturality of both \(\rho'\) and the cocone components in the amalgam. In fact, if two maps \(\rho_B(\xi)\) and \(\rho_C(\psi)\) defined from \(B\) and \(C\), respectively, to \(\Aut_c(X)\) have the same image subgroup \(M_\xi=M_\psi\) of \(\Aut_c(X)\), then by naturality of \(\rho\), and the fact that the inclusion of \(M_\xi\) into \(\Aut_c(X)\) is a monomorphism, for any equivariant morphism \(f \colon B\to C\) we have that the square on the left in
    \[
        \xymatrix{B \ar[d]_-{f} \ar[r]^-{\rho'_B(\xi)} & M_\xi \ar@{=}[d] \ar[r]^-{j_{M_\xi}} & T \ar@{=}[d]\\
        C \ar[r]_-{\rho'_C(\psi)}  & M_\psi \ar[r]_-{j_{M_\psi}} & T}
    \]
    commutes. Since the entire diagram is commutative, we conclude that \(\tau\) is a natural monomorphism of functors. Thus, the category \(\Nil_2(\Grp)\) is weakly action representable, and a weak representing object of \(X\) is the abelian group \(T\).
\end{proof}

The result of \Cref{thm_wra} settles an open question left in~\cite{XabiMancini}, where it is shown that, for \(k \geq 3\), the category \(\Nil_k(\Grp)\) is not weakly action representable, and at the same time establishes a connection with the case of \(2\)-nilpotent Lie algebras (see~\cite[Theorem 2.21]{WRAAlg}, where it is proved that the variety of \(2\)-nilpotent anti-commutative algebras is weakly action representable).

As explained in~\cite[Corollary~5.3 and Corollary 5.4]{WRA}, the existence of the initial weak representation follows from the existence of a weak representation and the fact that \(\Nil_2(\Grp)\) is a total category~\cite{Street}.

\section{A note on local algebraic cartesian closedness}\label{sec_LACC}

We conclude the article with a proof that the variety \(\Nil_2(\Grp)\) is not \emph{locally algebraically cartesian closed} \LACC{}. As explained in the introduction, this condition was introduced by J.~R.~A.~Gray in~\cite{GrayThesis} in connection with the concept of \emph{algebraic exponentiation}, and it is closely related to the study of internal actions in semi-abelian categories.

Recall that a semi-abelian category \(\C\) is said to be \emph{locally algebraically cartesian closed} when the change-of-base functors in the fibration of points are left adjoints. This is equivalent to the condition that for every object \(B\) of \(\C\), the functor
\[
    B\flat(-)\colon \C \to \C,
\]
where for any object \(X\) the object \(B\flat X\) is defined as the kernel of the canonical morphism
\[
    [\id_B,0]\colon B+X\to B,
\]
is a left adjoint. As explained in~\cite{GrayACS}, when \(\C\) is a variety of algebras, this is further equivalent to the conditions
\begin{enumerate}
    \item \(B\flat(-)\) preserves all colimits;
    \item \(B\flat(-)\) preserves binary sums.
\end{enumerate}
Hence, \(\C\) is \LACC{} if and only if, for every pair of objects \(X\) and \(Y\) of \(\C\), the canonical comparison morphism
\begin{equation}\label{comparison}
    [B \flat \id_X, B \flat \id_Y] \colon (B\flat X)+(B\flat Y)\to B\flat(X+Y)
\end{equation}
is an isomorphism.

Examples of semi-abelian categories satisfying the condition \LACC{} include the category of groups, the variety of Lie algebras over a unital commutative ring~\cite{GrayLie}, and the category of cocommutative Hopf algebras over a field of characteristic zero, see~\cite{acc, hopf}. It was then proved in~\cite{GM-VdL3} that, among varieties of non-associative algebras over an infinite field of characteristic different from \(2\), the only non-abelian \LACC{} variety is the category of Lie algebras. Thus, for varieties of non-associative algebras, action representability is equivalent to the condition \LACC{}. On the other hand, for arbitrary semi-abelian categories this is known to fail in general. Indeed, the category of Boolean rings is action representable~\cite{BJK2} but does not satisfy \LACC{}, as explained in~\cite[Proposition 6.4]{GrayACS}. It remains an open problem whether local algebraic cartesian closedness implies action representability, or whether a counterexample exists. This makes us wonder what happens in the case of \(2\)-nilpotent groups. We prove the following result:

\begin{theorem}
    The variety \(\Nil_2(\Grp)\) is not locally algebraically cartesian closed.
\end{theorem}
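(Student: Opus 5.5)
The plan is to exhibit one explicit triple \(B,X,Y\) for which the comparison morphism \eqref{comparison} fails to be injective. We take the simplest candidates \(B=X=Y=\Z\), viewed as free \(2\)-nilpotent groups on one generator, say \(b\), \(x\), \(y\) respectively. Coproducts in \(\Nil_2(\Grp)\) of free objects are free. Hence \(B+X\) is the free \(2\)-nilpotent group on \(b,x\), which is the Heisenberg group, and \(B+X+Y\) is the free \(2\)-nilpotent group on \(b,x,y\).

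The first step is to compute \(B\flat X\). It is the kernel of \([\id_B,0]\colon B+X\to B\), which is the normal closure of \(x\). Using the normal form \(b^i x^j [b,x]^k\) in the Heisenberg group, this kernel is \(\{x^j[b,x]^k\}\). Since \([b,x]\) is central, it is free abelian of rank \(2\) on \(x\) and \(c:=[b,x]\). Symmetrically, \(B\flat Y\cong \Z^2\) with basis \(y\) and \(d:=[b,y]\). The comparison map \eqref{comparison} sends \(x\mapsto x\), \(c\mapsto [b,x]\), \(y\mapsto y\) and \(d\mapsto [b,y]\), now read inside \(B+X+Y\).

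The second step is to describe the coproduct of the two abelian groups \(A=B\flat X\) and \(A'=B\flat Y\) in \(\Nil_2(\Grp)\). We use that this coproduct is the free \(2\)-nilpotent group on \(x,c,y,d\) modulo \([x,c]=[y,d]=1\). Equivalently, it is a central extension of \(A\oplus A'\) by \(A\otimes A'\), with commutator pairing \((a,a')\mapsto a\otimes a'\). We can either prove this directly, by building the group on \(A\times A'\times (A\otimes A')\) with a bilinear cocycle and checking its universal property, or argue more cheaply. The cheap route is to map the coproduct onto a suitable concrete \(2\)-nilpotent group, for instance a Heisenberg group, sending \(x\) and \(d\) to two generators whose commutator is nontrivial and sending \(c\) and \(y\) to \(1\). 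This uses only the universal property of the coproduct and shows directly that \([x,d]\neq 1\) in \((B\flat X)+(B\flat Y)\).

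The final step is to compute the image of \([x,d]\). It is \([x,[b,y]]\), a commutator of weight three in \(B+X+Y\), hence trivial because that group is \(2\)-nilpotent. So the nontrivial element \([x,d]\) lies in the kernel of \eqref{comparison}, which is therefore not an isomorphism, and \(\Nil_2(\Grp)\) is not \LACC{}. As a sanity check, counting Hirsch lengths gives \(5\) for the target against \(4+4=8\) for the source, which shows the map cannot even be injective.

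The only real obstacle is the rigorous verification that \([x,d]\neq1\) in the coproduct. The quotient-onto-Heisenberg argument disposes of this cleanly, so we would present that rather than the full structure theorem for coproducts.
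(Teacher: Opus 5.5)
Your proof is correct and follows the paper's argument: the same choice \(B=X=Y=\Z\), the same identification \(B\flat X\cong\Z\times\Z\), and the same kernel witness \([x,[b,y]]\) (the paper's \([x,v]\)), which is trivial because it is a weight-three commutator. Your map onto a Heisenberg group, showing \([x,d]\neq 1\) in \((B\flat X)+(B\flat Y)\), justifies a nontriviality that the paper only asserts.
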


\begin{proof}
    It is sufficient to find \(2\)-nilpotent groups \(B\), \(X\), and \(Y\) such that \eqref{comparison} is not a monomorphism. Here, when we write \(G=\langle S \mid R \rangle_{\Nil_2}\), we mean that \(G\) is the group generated by \(S\)
    subject to the relations \(R\) in the variety \(\Nil_2(\Grp)\). Equivalently, \(G\) is the quotient of the free \(2\)-nilpotent group on \(S\) by the normal subgroup generated by \(R\). In
    particular, all commutators are tacitly central.

    Let \(B=\langle b\rangle\), \(X=\langle x\rangle\), and \(Y=\langle y\rangle\) be three copies of the infinite cyclic group~\(\Z\). One may check that
    \begin{align*}
        B \flat X     & = \langle x,u \mid [x,u]=1 \rangle_{\Nil_2} \cong \Z \times \Z,                           \\
        B \flat Y     & = \langle y,v \mid [y,v]=1 \rangle_{\Nil_2} \cong \Z \times \Z,                           \\
        B \flat (X+Y) & =\langle x,y,u,v \mid [x,u]=[y,v]=[u,v]=1 \rangle_{\Nil_2} \cong H_3 \times \Z \times \Z,
    \end{align*}
    where \(u=[b,x]\), \(v=[b,y]\), and \(H_3\) denotes the discrete Heisenberg group.

    Furthermore, the coproduct \((B \flat X) + (B \flat Y)\) is the group
    \[
        \langle x,y,u,v \mid [x,u]=[y,v]=1 \rangle_{\Nil_2},
    \]
    whose commutator subgroup is the abelian group generated by
    \[
        [x,y], \quad [x,v], \quad [y,u], \quad [u,v].
    \]
    We remark that none of these four commutators is trivial.

    Now consider the comparison map
    \[
        \varphi=[B \flat \id_X,\, B \flat \id_Y]\colon (B\flat X)+(B\flat Y)\to B\flat(X+Y).
    \]
    It is defined on generators by
    \[
        x \mapsto x,\quad y \mapsto y,\quad u \mapsto u,\quad v \mapsto v,
    \]
    where we use the same symbols for generators in the domain and codomain.

    Since \(u\) and \(v\) are central in \(B\flat (X+Y)\), it follows that
    \[
        \varphi([x,v])=\varphi([y,u])=\varphi([u,v])=1.
    \]
    Hence \(\Ker(\varphi)\) is non-trivial, so \(\varphi\) is not an isomorphism. Therefore, the category \(\Nil_2(\Grp)\) is not \LACC{}.
\end{proof}

This result suggests a possible direction for future research, namely to investigate whether the failure of the condition \LACC{} extends more generally to arbitrary non-abelian subvarieties of the category \(\Grp\).

\section*{Acknowledgements}
The first and second authors are supported by the University of Palermo and by the ``National Group for Algebraic and Geometric Structures and their Applications'' (GNSAGA -- INdAM).

The second author is also supported by the SDF Sustainability Decision Framework Research Project -- MISE decree of 31/12/2021 (MIMIT Dipartimento per le politiche per le imprese -- Direzione generale per gli incentivi alle imprese) -- CUP:~B79J23000530005, COR:~14019279, Lead Partner:~TD Group Italia Srl, Partner:~University of Palermo, and he is a Postdoctoral Researcher of the Fonds de la Recherche Scientifique--FNRS.

The third author is a Senior Research Associate of the Fonds de la Recherche Scientifique--FNRS.


\end{document}